\title
{Longer cycles in vertex transitive graphs}
\author{
	Matt DeVos	\\
	{\normalsize mdevos@sfu.ca}
}
\date{}
\begin{document}
\bibliographystyle{plain}
\maketitle
\setcounter{page}{1}
\newtheorem{theorem}{Theorem}
\newtheorem{lemma}[theorem]{Lemma}
\newtheorem{corollary}[theorem]{Corollary}
\newtheorem{proposition}[theorem]{Proposition}
\newtheorem{definition}[theorem]{Definition}
\newtheorem{claim}{Claim}
\newtheorem{conjecture}[theorem]{Conjecture}
\newtheorem{observation}[theorem]{Observation}
\newtheorem{problem}[theorem]{Problem}
\newtheorem{question}[theorem]{Question}

\begin{abstract}
In 1979 Babai found a clever argument to prove that every connected vertex transitive graph on $n \ge 3$ vertices contains a cycle of length at least $\sqrt{3n}$.  Here we modify his approach to show that such graphs must contain a cycle of length at least $\big( 1 - o(1) \big) n^{3/5}$.
\end{abstract}

A classic problem area in algebraic graph theory concerns the presence of long cycles (or paths) in connected vertex transitive graphs.  Apart from graphs on fewer than 3 vertices, there are only four graphs in this class known to not have Hamiltonian cycles: the Petersen graph, the Coxeter graph, and their truncations\footnote{To truncate a cubic graph, for every vertex $v$ incident with $e_1, e_2, e_3$, add new vertices $v_1, v_2, v_3$ all pairwise adjacent, change $e_i$ to have $v_i$ has an end instead of $v$ for $1 \le i \le 3$ and then delete $v$.}.    In particular, it is possible that every connected vertex transitive graph has a Hamiltonian path (a question of Lov\'asz) and it is possible that every connected Cayley graph on at least 3 vertices has a Hamiltonian cycle (a folklore conjecture\footnote{It seems clear that this has been independently conjectured by numerous researchers.}).  There is a considerable body of work demonstrating that certain restricted classes of connected vertex transitive graphs are Hamiltonian, but there seems to have been little attention given to finding general lower bounds.  Indeed, the only such result we know of is the following theorem from 1979.  

\begin{theorem}[Babai \cite{babai}]
\label{thetheorem}
Every connected vertex transitive graph with $n \ge 3$ vertices contains a cycle of length at least $\sqrt{3n}$.
\end{theorem}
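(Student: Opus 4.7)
Plan: Argue by contradiction: assume the longest cycle $C$ has length $c<\sqrt{3n}$, and set $W=V(G)\setminus V(C)$ (nonempty because $c<n$); the goal is to derive the contradiction $n\le c^2/3$. The main tool will be an \emph{arc lemma}: if $u,v\in V(C)$ are joined by a path $Q$ of length $p$ whose interior lies in $W$, then both arcs of $C$ from $u$ to $v$ have length at least $p$, since otherwise concatenating $Q$ with the longer arc yields a cycle longer than $C$. In particular $p\le c/2$.

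I would next use vertex-transitivity to control the depth $D:=\max_{w\in W} d_G(w,V(C))$. Picking $w_0\in W$ realising $D$ and an automorphism $\varphi$ sending some $v_0\in V(C)$ to $w_0$, the image $C':=\varphi(C)$ is a longest cycle through $w_0$. By the triangle inequality, every vertex of $C'$ at $C'$-distance less than $D$ from $w_0$ lies in $W$, so $C'$ contains a subpath of length at least $2D$ with interior in $W$ joining two vertices of $V(C)$. The arc lemma then gives $c\ge 4D$, so $D\le c/4$.

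The final step---and the hardest---is to bound $|W|$ in terms of $c$. For each connected component $H$ of $G[W]$ with attachment set $B_H\subseteq V(C)$, the arc lemma applied to pairs of attachments forces the attachments to be well spread around $C$, and the depth bound $D\le c/4$ limits how many vertices each $H$ can contain. Aggregating these per-component constraints, and using vertex-transitivity to make each $v\in V(C)$ play an interchangeable role, I aim to obtain $|W|\le c(c-3)/3$, giving $n\le c^2/3$ and the desired contradiction.

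The arc lemma gives only per-pair constraints, and extracting a sharp global estimate on $|W|$ is the main obstacle. The natural charging scheme is to assign each $w\in W$ to its nearest attachment $u(w)\in V(C)$ and to exploit the length budget $c-p$ left on each arc by the corresponding path of length $p$ through $W$; the constant $3$ in $\sqrt{3n}$ should emerge from averaging these contributions against the cyclic structure of $C$.
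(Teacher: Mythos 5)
Your outline stalls exactly at the step you identify as the hardest: the bound $|W| \le c(c-3)/3$ is announced as a goal but never derived, and the tools you have set up do not suffice to derive it. The arc lemma and the depth bound $D \le c/4$ control how \emph{far} a vertex of $W$ can lie from $C$ and how the attachment vertices of a component of $G[W]$ must be spaced along $C$, but neither controls the \emph{number} of vertices in such a component: a component of depth at most $c/4$ can be arbitrarily large when degrees are unbounded, so a charging scheme that sends each $w \in W$ to a nearest attachment has no a priori bound on the charge received by a single vertex of $C$. The phrase ``using vertex-transitivity to make each $v \in V(C)$ play an interchangeable role'' is precisely where the missing argument lives, and the only use of transitivity your outline actually makes --- one automorphism carrying one vertex of $C$ onto a deepest vertex of $W$ --- is a local statement that cannot by itself yield a global count of $|W|$. (There are also two smaller gaps you would need to patch: you must know that $\varphi(C)$ meets $C$ at all, and in at least two vertices, before the ``subpath of length $\ge 2D$ joining two vertices of $V(C)$'' exists; this requires first establishing 2-connectivity, which you never do.)

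The proof in the paper is structurally different and worth contrasting with your plan. Instead of analyzing the geometry of $V(G) \setminus V(C)$, it averages over the entire automorphism group: Lemma~\ref{thelemma} shows that if a set $B$ meets every translate $C^g$ of a longest cycle in at least $k$ vertices, then $|B||C| \ge kn$. Taking $B = C$ to be the vertex set of a longest cycle, the hypothesis $|C \cap C^g| \ge 3$ holds for every automorphism $g$ because a connected vertex-transitive graph on at least $3$ vertices is either a cycle or $3$-connected (Mader--Watkins), and in a $3$-connected graph any two longest cycles meet in at least $3$ vertices; this immediately gives $|C|^2 \ge 3n$. The constant $3$ thus comes from $3$-connectivity, not from an averaging over arcs of $C$. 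If you want to salvage your approach, the realistic route is to replace your per-component accounting by this kind of orbit-counting: your deepest-vertex argument produces, for each $g$, quantitative information about $C \cap C^g$, and it is the sum over all $g \in G$ (via the orbit--stabilizer theorem) that converts such pairwise intersection data into a bound on $n$.
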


Let us note that in contrast to the above dialogue suggesting that connected vertex transitive graphs may be nearly Hamiltonian, Babai has offered a conjecture in the other direction.  He conjectures the existence of a constant $c > 0$ so that there exist arbitrarily large connected vertex transitive graphs for which the length of the longest cycle is at most $(1-c)$ times the number of vertices.

In this note we follow a similar approach to that used by Babai in order to improve his bound to $\big( 1 - o(1) \big) n^{3/5}$.  The key tool we need is the following double-counting lemma that is a variation of one Babai employs.  Here we let our groups act on the right, so if the group $G$ acts on the set $V$ we use $v^g$ to denote the image of a point $v \in V$ under the permutation associated with $g \in G$.  We let $G_v = \{ g \in G \mid v^g = v \}$ denote the \emph{stabilizer} of $v$. 

\begin{lemma}
\label{thelemma}
Let $G$ be a finite group acting transitively on the set $V$, let $B,C \subseteq V$ and let $k \ge 0$.  If $|B \cap C^g| \ge k$ holds for every $g \in G$, then $|B| |C| \ge k |V|$.
\end{lemma}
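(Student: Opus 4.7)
The plan is a standard double counting. I would count the size of the set
\[
S = \{ (g, v) \in G \times V : v \in B \cap C^g \}
\]
in two different ways and compare.

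First, I would slice by the first coordinate: for each fixed $g \in G$, the number of $v$ contributing is exactly $|B \cap C^g|$, which by hypothesis is at least $k$. This immediately gives $|S| \ge k|G|$.

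Next, I would slice by the second coordinate. For $v \in B$ fixed, I need to count the number of $g \in G$ with $v \in C^g$, equivalently $v^{g^{-1}} \in C$. Substituting $h = g^{-1}$ (a bijection of $G$), this becomes $|\{ h \in G : v^h \in C \}|$. Since $G$ acts transitively on $V$, for each $c \in C$ there are exactly $|G_v|$ elements $h \in G$ with $v^h = c$, and the orbit-stabilizer theorem gives $|G_v| = |G|/|V|$. So the count is $|C| \cdot |G|/|V|$, and summing over $v \in B$ yields $|S| = |B| \cdot |C| \cdot |G|/|V|$.

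Comparing the two expressions, $|B||C||G|/|V| \ge k|G|$, and cancelling $|G|$ gives the desired inequality $|B||C| \ge k|V|$. There is no real obstacle here; the only point to be slightly careful about is the change of variable $h = g^{-1}$ when re-expressing membership in the translate $C^g$, and the invocation of transitivity together with orbit-stabilizer to turn the fiber count into $|G|/|V|$.
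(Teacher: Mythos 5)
Your proof is correct and is essentially the same double count as the paper's: both count the set $S = \{(g,v) : v \in B \cap C^g\}$, bound it below by $k|G|$ by slicing over $g$, and evaluate it exactly as $|B||C||G|/|V|$ via transitivity and orbit--stabilizer. The only cosmetic difference is that you organize the second count by fixing $v \in B$ first, while the paper fixes the pair $(y,x) \in B \times C$; the arithmetic is identical.
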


\begin{proof}
We count the members of the set $S = \{ (g,y) \in G \times V \mid y \in B \cap C^g \}$ in two ways.  First, observe that we have $|B| |C|$ ways to choose $y \in B$ and $x \in C$, and for each such choice there are exactly $|G_y|$ group elements $g$ satisfying $x^g = y$.  It follows from this that $|S| = |B| |C| |G_y|$.  On the other hand, the assumption $|B \cap C^g| \ge k$, implies that for all $|G|$ choices of $g$ there are at least $k$ valid choices for $y$, giving the bound $|S| \ge k |G|$.  Now we have $|B| |C| |G_y| = |S| \ge k |G|$, and the result follows from the Orbit Stabilizer Theorem ($|G| = |V| |G_y|$).  
\end{proof}

To see how this lemma may be applied, suppose that $B$ is a set of vertices in a vertex transitive graph and that $B$ hits every longest cycle in at least $k$ points.  Setting $C$ to be the vertex set of a longest cycle (and $G$ to be the automorphism group of the graph), the conditions in the lemma are satisfied, so the longest cycle must have length at least $\frac{k |V| }{|B|}$.  In short, this lemma gives us a lower bound on the length of a longest cycle from the existence of an efficient hitting set for longest cycles.

For Babai's argument, he takes his hitting set $B$ to be the vertex set of a longest cycle.  Connected vertex transitive graphs on $\ge 3$ vertices are either cycles (for which we have nothing to prove) or are 3-connected by a theorem of Mader and Watkins (see \cite{agt}).  An easy exercise shows that in a 3-connected graph any two longest cycles must intersect in at least 3 vertices.  Thus, taking $B = C$ to be the vertex set of a longest cycle and applying the  preceding lemma with $k=3$ gives us Babai's Theorem: $|C| \ge \sqrt{3 |V|}$.

Our argument uses this idea and the following additional lemma.

\begin{lemma}
Let $X$ be a 2-connected graph and let $C_1, C_2$ be longest cycles in $X$.  If $|V(C_1) \cap V(C_2)| = k$, then there exists a set of at most $k^2 + k$ vertices hitting all longest cycles.
\end{lemma}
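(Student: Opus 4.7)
The plan is to construct an explicit hitting set $B \supseteq A := V(C_1) \cap V(C_2)$ of the desired size and argue by contradiction that no longest cycle can avoid it. Label the elements of $A$ as $v_1, \ldots, v_k$ in the cyclic order inherited from $C_1$, so that $A$ splits $C_1$ into $k$ arcs $P_1, \ldots, P_k$ (with $P_i$ joining $v_i$ to $v_{i+1}$ and interior disjoint from $A$), and similarly splits $C_2$ into arcs $Q_1, \ldots, Q_k$. To each arc $P_i$ I would adjoin up to $k$ ``marker'' vertices from its interior (for concreteness, the first $k$ vertices encountered when traversing $P_i$ from $v_i$, or all of its interior if fewer than $k$ vertices are available). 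Then $B = A \cup (\text{markers})$ has $|B| \le k + k \cdot k = k^2 + k$.

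To show that $B$ meets every longest cycle, suppose for contradiction that $C$ is a longest cycle with $V(C) \cap B = \emptyset$. Since $V(C) \cap A = \emptyset$, any vertex of $V(C) \cap V(C_1)$ lies in the interior of some $P_i$, and moreover sits at distance exceeding $k$ along $P_i$ from at least one endpoint. The $2$-connectivity of $X$, applied to the two longest cycles $C$ and $C_1$, forces $|V(C) \cap V(C_1)| \ge 2$, so I may choose distinct $x, y \in V(C) \cap V(C_1)$ both deep in their respective arcs. From $x$ and $y$ I would build a hybrid cycle $C^*$ by splicing one of the two $x$-$y$ sub-arcs of $C$ with one of the two $x$-$y$ sub-arcs of $C_1$; the marker buffer together with the maximality of $C_1$ should force one of the four such hybrids to be simple and strictly longer than $C_1$, a contradiction.

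The main obstacle is the exchange step. One must simultaneously certify that the chosen hybrid really is a simple cycle (rather than a walk that revisits vertices, or a disjoint union of shorter cycles) and verify the length inequality $|C^*| > |C_1|$. I expect the first issue to be handled by a pigeonhole on the $k$ arcs $P_i$ and the sub-arcs of $C$ strung between consecutive common vertices of $V(C) \cap V(C_1)$, and the second by exploiting the width-$k$ buffer along each $P_i$. When $x$ and $y$ are forced into a common arc $P_i$, a variant of the splice using $C_2$ in place of $C_1$ should close the case, which is where the hypothesis genuinely needs \emph{both} longest cycles $C_1$ and $C_2$.
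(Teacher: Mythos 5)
There is a genuine gap, and it sits exactly where you flagged it: the exchange step is not a technical loose end to be checked later but the entire content of the lemma, and the marker construction supplies no mechanism to close it. Two concrete problems. First, a hybrid of an $x$--$y$ sub-arc of $C$ with an $x$--$y$ sub-arc of $C_1$ is a simple cycle only if the two sub-arcs meet nowhere except at $x$ and $y$; since $C$ may meet $C_1$ in many vertices scattered among the arcs, none of the four hybrids need be simple, and a pigeonhole over the $P_i$ does not repair this, because the obstruction comes from the \emph{other} common vertices of $C$ and $C_1$, not from which arcs $x$ and $y$ inhabit. Second, and more fundamentally, even in the cleanest case the inequalities run the wrong way. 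Suppose $V(C) \cap V(C_1) = \{x,y\}$ with both vertices deep in a single arc $P_i$, and write $R_1, R_2$ for the two $x$--$y$ paths of $C$ and $S_1, S_2$ for those of $C_1$. All four hybrids $R_a \cup S_b$ are then simple cycles, and the maximality of $C$ and $C_1$ gives $|R_a \cup S_b| \le |C_1|$ for every choice of $a$ and $b$; these constraints are simultaneously satisfiable (e.g.\ all four paths of length $|C_1|/2$), so no contradiction is available. Crucially, the distance from $x$ to $v_i$ along $P_i$ never enters any of these length computations: the markers control where $x$ sits on its arc, but the length of a hybrid depends on the lengths of the sub-arcs \emph{between} $x$ and $y$, which the markers do not control. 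The fallback to $C_2$ inherits the same defect, and as written your set $B$ contains no markers on the arcs of $C_2$ at all.

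For contrast, the paper avoids hybridizing an unknown longest cycle $C$ with $C_1$ altogether. It shows that $X - \big(V(C_1)\cap V(C_2)\big)$ cannot contain more than $k^2$ vertex-disjoint paths from $V(C_1)\setminus V(C_2)$ to $V(C_2)\setminus V(C_1)$: among more than $k^2$ such paths, two of them, say $P$ and $P'$, would start in the same arc of $C_1 - V(C_2)$ and end in the same arc of $C_2 - V(C_1)$ (pigeonhole over the at most $k \times k$ pairs of arcs), and because $P$ and $P'$ are disjoint from each other and internally disjoint from both cycles, they splice cleanly into $C_1$ and $C_2$ to produce two cycles of total length exceeding $|C_1| + |C_2|$ --- a contradiction with no simplicity worries. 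Menger's theorem then yields a separator $B_0$ of size at most $k^2$, and $B = B_0 \cup \big(V(C_1)\cap V(C_2)\big)$ hits every longest cycle because every longest cycle meets both $V(C_1)$ and $V(C_2)$ and must therefore cross the separator. If you want to salvage your line of attack, the lesson is that the extra $k^2$ vertices should be chosen as a separator certified by Menger, not as a buffer of prescribed vertices on $C_1$.
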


\begin{proof}
If the graph $X' = X - \big( V(C_1) \cap V(C_2) \big)$ contains more than $k^2$ vertex disjoint paths from $V(C_1) \setminus V(C_2)$ to $V(C_2) \setminus V(C_1)$, then this graph contains two vertex disjoint paths, say $P$ and $P'$, internally disjoint from $V(C_1) \cup V(C_2)$ both having initial vertex in the same component, say $H_1$, of the subgraph $C_1 \setminus V(C_2)$, and both having terminal vertex in the same component, say $H_2$, of the subgraph $C_2 \setminus V(C_1)$.  However, this gives a contradiction since we can now find two cycles $C_1', C_2'$ with $C_i' \subseteq C_i \cup P \cup P' \cup H_{3-i}$ satisfying $|V(C_1')| + |V(C_2')| = |V(C_1)| + |V(C_2)| + 2| V(P)| + 2 |V(P')|$.  Therefore, such paths do not exist and we may apply Menger's Theorem to choose a set $B_0$ of at most $k^2$ vertices separating $V(C_1) \setminus V(C_2)$ and $V(C_2) \setminus V(C_1)$ in $X'$.  Now define $B = B_0 \cup \big( V(C_1) \cap V(C_2) \big)$.  Note that $|B| \le k^2 + k$ and that $B$ separates $V(C_1)$ and $V(C_2)$ in the original graph $X$.  Since $X$ is 2-connected, any two longest cycles in $X$ must intersect.  In particular, every longest cycle in $X$ must contain a vertex in both $V(C_1)$ and in $V(C_2)$ and must therefore intersect~$B$.  
\end{proof}

With this, we can prove our main result.

\begin{theorem}
Every connected vertex transitive graph on $n \ge 3$ vertices contains a cycle of length at least $(1 - o(1)) n^{3/5}$.  
\end{theorem}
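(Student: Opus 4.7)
The plan is to introduce the quantity $k := \min\{|V(C_1) \cap V(C_2)| : C_1, C_2 \text{ longest cycles of } X\}$ and to extract two inequalities relating $k$, $n$, and the circumference $\ell$ of $X$ from the two lemmas above. First I dispose of the case where $X$ is itself a cycle; otherwise the Mader--Watkins theorem tells me $X$ is $3$-connected, so any two longest cycles of $X$ meet in at least three vertices and in particular $k \ge 3$. Fix a longest cycle $D$ in $X$, set $\ell = |V(D)|$, and let $G = \mathrm{Aut}(X)$; this group acts transitively on $V(X)$.

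The first inequality is obtained by applying the double-counting lemma with both set arguments taken to be $V(D)$: for every $g \in G$ the image $V(D)^g$ is the vertex set of a longest cycle, so by the definition of $k$ one has $|V(D) \cap V(D)^g| \ge k$, and the lemma gives $\ell^2 \ge k n$. The second inequality comes from applying the hitting-set lemma to a pair of longest cycles attaining the minimum $k$: this produces a set $B$ with $|B| \le k^2 + k$ meeting every longest cycle of $X$, and since $V(D)^g$ is a longest cycle for every $g$, another application of the double-counting lemma with this $B$ and $V(D)$ yields $\ell(k^2 + k) \ge n$. The first inequality implies $k \le \ell^2/n$; substituted into the (monotone in $k$) second inequality, this gives the combined relation $\ell^5 + n\ell^3 \ge n^3$.

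To finish I read off the asymptotics. If $\ell \ge n^{3/5}$ there is nothing to show; otherwise $n\ell^3 \le n^{14/5}$, whence $\ell^5 \ge n^3(1 - n^{-1/5})$ and therefore $\ell \ge n^{3/5}(1 - n^{-1/5})^{1/5} = (1-o(1))n^{3/5}$. The extremal regime of the argument is $k \approx n^{1/5}$, $\ell \approx n^{3/5}$, where both inequalities are essentially tight, so any further improvement via this framework would have to refine one of the two lemmas. I expect the main obstacle to be computational rather than conceptual: the two lemmas already do the real work, and what remains is the brief verification that the lower-order term $n\ell^3$ does not erode the leading constant.
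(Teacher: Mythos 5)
Your proposal is correct and takes essentially the same route as the paper: the same parameter $k$, the same two inequalities $\ell^2 \ge kn$ and $\ell(k^2+k) \ge n$ extracted from the two lemmas, combined by eliminating $k$. The only differences are cosmetic --- you invoke $3$-connectivity where the paper's proof uses only $2$-connectivity, and you carry out the final asymptotic bookkeeping more explicitly than the paper does.
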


\begin{proof}
Let $X$ be a connected vertex transitive graph with $n \ge 3$ vertices.  Note that $X$ must be 2-connected by vertex transitivity and the observation that a leaf of a spanning tree is not a cut-vertex.  Let $t$ be the length of a longest cycle and let $k$ be the minimum size of $V(C_1) \cap V(C_2)$ over all longest cycles $C_1$ and $C_2$.  By applying Lemma \ref{thelemma} with $B$ and $C$ equal to the vertex set of a longest cycle we get the bound $t \ge \sqrt{ k n }$.  The previous lemma implies the existence of a set of size $k^2 +k$ that hits every longest cycle in at least one vertex, so another application of Lemma \ref{thelemma} gives us $t \ge \frac{n}{k^2+k}$.  The combined bound $t \ge \max\{ \sqrt{kn}, \frac{n}{k^2+k} \}$ is weakest when these bounds coincide and $n = k^5 + 2k^4 + k^3$ and this gives $t \ge  \big( 1 - o(1) \big) n^{3/5}$ as claimed.
\end{proof}

Let us comment that we know how to improve the constant $1 - o(1)$, but we do not know how to increase the exponent above ${3/5}$.

\end{document}